\theoremstyle{definition}
\numberwithin{equation}{section}
\author[A.\,Bura]{A.\,Bura}
\author[C.\,Good]{C.\,Good}
\author[T.\,Samuel]{T.\,Samuel}
\address[A.\,Bura and C.\,Good]{School of Mathematics, University of Birmingham, UK}
\address[T.\,Samuel]{Department of Mathematics and Statistics, University of Exeter, UK}
\thanks{\emph{Funding acknowledgements}. The \emph{Birmingham-Leiden Collaboration Fund}, and EPSRC grants EP/S02297X/1 and EP/Y023358/1.}
\renewcommand{\emph}{\textsl}
\renewcommand{\textit}{\textsl}
\newtheorem{theorem}{Theorem}[section]
\newtheorem{mainthm}{Theorem}
\theoremstyle{definition}
\theoremstyle{remark}
\numberwithin{equation}{section}
\subjclass[2020]{37E05; 37B65}
\keywords{Topological dynamics, shadowing, interval maps, \mbox{$\beta$-transformations}, non-integer expansions, first return maps}
\title{The shadowing property for piecewise monotone interval maps}
\begin{document}

    \begin{abstract}
        The property of shadowing has been shown to be fundamental in both the theory of symbolic dynamics as well as continuous dynamical systems.  A quintessential class of discontinuous dynamical systems are those driven by transitive piecewise monotone interval maps and in particular \mbox{$\beta$-transformations}, namely transformations of the form $T_{\beta, \alpha} : x \mapsto \beta x + \alpha \; (\bmod \, 1)$ acting on $[0,1]$. We provide a short elegant proof showing that this class of dynamical systems does not possess the property of shadowing, complementing and extending the work of Chen and Portela.
    \end{abstract}

\maketitle

\section{Introduction}

Let $f : X \to X$ be a continuous function on a compact metric space $(X, \mathrm{d})$. Letting $\delta$ denote a positive real number, a $\delta$\emph{-pseudo-orbit} of the dynamical system $(X, f)$ is a sequence $(x_i)_{i\in \mathbb{N}_0}$ such that $\mathrm{d}(f(x_{i}),x_{i+1}) <\delta$. Given $\epsilon >0$, a sequence $(y_i)_{i \in \mathbb{N}_{0}}$ in $X$ is said to \emph{$\epsilon$-shadow} the sequence $(x_i)_{i\in \mathbb{N}_{0}}$ if $\mathrm{d}(y_i,x_i)<\epsilon$ for all $i \in \mathbb{N}_{0}$. We say that $f$ has \emph{shadowing} if for every $\epsilon >0 $, there exists a $\delta > 0$, such that every $\delta$-pseudo-orbit is $\epsilon$-shadowed by a true orbit, namely where we take $y_{i} = f^{i}(x)$ for all $i \in \mathbb{N}_{0}$ and some fixed $x \in X$. Additionally, we say that $f$ has finite shadowing if for all $\epsilon > 0$, there exists $\delta > 0$ such that, if $(x_i)_{i \in \{0,\dots,n \}}$ is a finite $\delta$-pseudo-orbit, then $\mathrm{d}(f^{i}(z),x_{i}) < \epsilon$ for some $z$ and for all $i \in \{0, \dots, n\}$.

Shadowing was introduced by Anosov to study hyperbolic dynamical systems, and has gone on to become widely studied in various other branches of mathematics. It features prominently when studying fractals that arise in the field of complex dynamics \cite{barwell-raines-meddaugh,barwell-raines}, and in the study of Axiom A diffeomorphisms \cite{ Bowen, bowen-markov-partitions}. It has also  developed into an important concept in numerical analysis as rounding errors ensure that computed orbits will necessarily be a pseudo-orbit \cite{Corless, Pearson}.

Chen's \cite{84b458cf-12bb-3c39-8e22-eea22417dd6c} work on the shadowing property for continuous uniformly piecewise monotone interval maps, naturally leads us to ask what happens when continuity is no longer a requirement. We show that shadowing fails in the case for transitive piecewise continuous monotone interval maps.  Moreover, we show that all \mbox{$\beta$-transformations}, a class of interval maps that ubiquitous in the study of dynamical systems and which are not necessarily transitive, does not possess the property of shadowing.

More precisely, we say that $f : [0,1] \to [0,1]$ is a \emph{piecewise continuous monotone interval map} if there exist non-empty disjoint intervals $I_0 = (0,z_{0}), I_{1}=(z_{0},z_{1}), \dots, I_{n-1}=(z_{n-2},z_{n-1}), I_n = (z_{n-1},1)$, for some $n \in \mathbb{N}$, such that $f$ is strictly monotone and continuous on each of these intervals, and $\lim_{x \nearrow z_{m}} f(x) \neq \lim_{x \searrow z_{m}} f(x)$ for all $m \in \{ 0, 1, \dots, n-1 \}$. The collection of intervals $I_{0}, \dots, I_{n-1}$ is referred to as a \textsl{partition} of $f$. We say that an interval map $f : [0,1] \to [0,1]$ is \emph{transitive}, if for any open interval $U$ of $[0,1]$, there exists an $m \in \mathbb{N}_{0}$ with $\bigcup_{n = 0}^{m} f^{n}(U) \supseteq (0,1)$.

    \begin{mainthm}\label{thm:Shadowing_Main}
        If $f$ is a transitive piecewise continuous monotone interval map, then $f$ does not have finite shadowing, and hence, does not have shadowing.
    \end{mainthm}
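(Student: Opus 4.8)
The first move is to localise the problem at a single jump. Pick one breakpoint $z=z_m$ and set $a=\lim_{x\nearrow z}f(x)$, $b=\lim_{x\searrow z}f(x)$; by hypothesis $\gamma:=|a-b|>0$, and after relabelling I may assume $a<b$ with gap midpoint $c=(a+b)/2$. Using only continuity of $f$ on the two adjacent pieces $I_m,I_{m+1}$, I would fix $\rho=\gamma/8$ and choose $\eta>0$ so that $f\big((z-\eta,z)\big)\subseteq(a-\rho,a+\rho)$ and $f\big((z,z+\eta)\big)\subseteq(b-\rho,b+\rho)$, and then set $\epsilon:=\min(\eta,\gamma/8)$. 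This records the basic \emph{forbidden-image} phenomenon: every point within $\epsilon$ of $z$ (other than $z$ itself) is mapped to within $\rho$ of $\{a,b\}$, hence at distance at least $3\gamma/8>\epsilon$ from $c$, so a true orbit can never pass from $\epsilon$-close to $z$ to $\epsilon$-close to $c$ in one step. Crucially, note what this does \emph{not} give: since a $\delta$-pseudo-orbit changes $x_{i+1}$ from $f(x_i)$ by at most $\delta\le\epsilon\ll\gamma$, no pseudo-orbit can cheaply jump \emph{across} the gap either, so the obstruction cannot be a one- or two-step affair; it must be a long-term one, and transitivity is what will convert the static jump into dynamical repulsion.

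The model case to keep in mind is $T_{2,0}$ (the doubling map), where the argument is clean and transparent. Here the right branch's defining formula $x\mapsto 2x-1$ fixes the endpoint $x=1$, yet $f(1)=0$ and $f$ is expanding there, so $1$ is an \emph{unattained, one-sided, repelling fixed point}. I would exploit this directly: the constant sequence $x_i=1-\tfrac{\delta}{2}$ $(0\le i\le N)$ is a genuine $\delta$-pseudo-orbit because $f(1-\tfrac{\delta}{2})=1-\delta$, yet any $\epsilon$-shadow must remain in $(1-\tfrac{\delta}{2}-\epsilon,1]$ for all $N+1$ times; since points just below $1$ are pushed down by a factor $2$ at each step and $1$ itself maps to $0$, no true orbit can linger there for more than $O(\log(1/\delta))$ steps. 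Taking $N$ large defeats any prospective $\delta$, so finite shadowing fails. The same works verbatim whenever the map possesses such a one-sided ghost fixed point $p$ at a discontinuity (or endpoint): a constant $\delta$-pseudo-orbit just inside $p$ is valid, while expansion ejects every true orbit and $p$ itself jumps away.

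The real work, and the main obstacle, is that a \emph{non-integer} $\beta$-transformation has \emph{no} such branch-fixed endpoint: the top branch $x\mapsto\beta x-\lfloor\beta\rfloor$ fixes $\lfloor\beta\rfloor/(\beta-1)\notin[0,1]$, so the naive hovering pseudo-orbit is not even a valid $\delta$-pseudo-orbit. My plan is to \emph{manufacture} a ghost fixed point by inducing. Transitivity (indeed the locally-eventually-onto condition in the definition) guarantees that orbits return to every subinterval and that preimages of $z$ are dense; I would use the latter to select an interval $W$ abutting the discontinuity whose endpoints are pinned to $z$ and to a chosen preimage of $z$, and pass to the first-return map $\hat f$ of $f$ to $W$. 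The jump of $f$ at $z$ then reappears as a jump of $\hat f$ at an endpoint of $W$ at which the relevant return branch fixes that endpoint while $\hat f$ sends it elsewhere, i.e.\ exactly the unattained, one-sided, repelling fixed point used in the model case (expansion being inherited from transitivity). Applying the hover-and-escape construction to $\hat f$ produces an unshadowable pseudo-orbit for the induced system; the final step is to unfold it, concatenating the bounded-length return segments (with the same small kicks) into a $\delta$-pseudo-orbit for $f$ and checking that an $\epsilon$-shadow for $f$ would restrict to one for $\hat f$. I expect the delicate points to be precisely this last transfer—controlling return times and the relation between the two shadowing scales—together with the verification that the induced endpoint is genuinely repelling and not accidentally attained, which is where the transitivity hypothesis is indispensable.
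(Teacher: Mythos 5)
There is a genuine gap, and it sits at the heart of your plan: the hover-and-escape mechanism does not disprove \emph{finite} shadowing, only (infinite) shadowing. Take your own model case, the doubling map, with the constant $\delta$-pseudo-orbit $x_i = 1-\tfrac{\delta}{2}$, $0 \le i \le N$. Your claim that ``no true orbit can linger there for more than $O(\log(1/\delta))$ steps'' has a quantifier error: the lingering time of a true orbit is $O(\log(1/d_0))$, where $d_0$ is the \emph{initial} distance to the ghost fixed point $1$, and in the definition of finite shadowing the shadowing point is chosen \emph{after} the pseudo-orbit (hence after $N$) and may start arbitrarily close to $1$. Concretely, the point $z = 1 - 2^{-N-1}\delta$ satisfies $f^i(z) = 1 - 2^{\,i-N-1}\delta \in (1-\delta,1)$ for all $i \le N$, so $\lvert f^i(z) - x_i\rvert < \delta/2 < \epsilon$: the length-$N$ hovering pseudo-orbit \emph{is} $\epsilon$-shadowed, for every $N$. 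So your construction kills only infinite shadowing; since shadowing implies finite shadowing (extend a finite pseudo-orbit by a true orbit of its last point), your argument proves strictly less than the statement, which explicitly asserts failure of \emph{finite} shadowing. The same flaw propagates to the general case, because the induced system is attacked by the identical hovering argument. A secondary problem: transitivity gives you dense preimages of the discontinuity $z$, but not an \emph{exact} ghost fixed point of a first-return branch; choosing $y$ near $f_{+}(z)$ with $f^{m}(y)=z$ makes the one-sided limit of the return branch at $z$ only \emph{close} to $z$, not equal to it, so the ``manufactured'' fixed point does not literally exist.

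It is instructive to contrast this with how the paper avoids escape-time estimates altogether. There, the pseudo-orbit is a finite loop of fixed length: it starts exactly at the discontinuity $z_k$, jumps (by less than $\delta$) to a genuine preimage $y \in (f(z_k)-\delta, f(z_k))$ with $f^{m}(y) = z_k$ (this is where transitivity enters), follows the true orbit of $y$ back to $z_k$, and then steps to the \emph{wrong side} of $z_k$ by $\delta$. The choice of $\epsilon$ relative to the jump sizes forces any shadowing point $x^{*}$ onto one definite side of $z_k$ at time $0$; monotonicity of the branches (tracked by the parity $m_y$) keeps $f^{j}(x^{*})$ on a definite side of $x_j$ for all $j$, so $f^{m+1}(x^{*})$ and $x_{m+1}$ end up on opposite sides of $z_k$; one further application of $f$ then separates them by more than $\epsilon$ because of the jump. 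No expansion, no asymptotics, no bound on return times is needed, which is exactly why that argument defeats finite shadowing while hovering cannot.
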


\Cref{thm:Shadowing_Main} extends and complements the results of 
\cite{Coven,84b458cf-12bb-3c39-8e22-eea22417dd6c,Raquel_Ribeiro_Barroso_Portela_Thesis}.  In particular, Coven, Kan and Yorke \cite{Coven} gave a classification of which tent maps, namely maps of the form
    \begin{align*}
        f_k(x)= 
            \begin{cases}
                kx      &\text{if} \; 0 \leq x \leq1,\\
                k(2-x)\quad  &\text{if} \; 1 \leq x \leq 2,
            \end{cases}
    \end{align*}
have shadowing. Chen \cite{84b458cf-12bb-3c39-8e22-eea22417dd6c} extended these results to continuous uniformly piecewise linear interval maps, and Portela \cite{Raquel_Ribeiro_Barroso_Portela_Thesis} then expanded upon this work studying shadowing for discontinuous piecewise linear interval maps. In these works, the authors presented various conditions, such as the \emph{linking property} and \emph{Condition C} to determine whether these maps have shadowing or not. These conditions can be difficult to verify, however, our proof of \Cref{thm:Shadowing_Main} provides a simpler alternative method, proving this result, under the hypothesis of transitivity and without the assumption of linearity.

Moreover, we show how this result can be applied to a widely studied class of expanding interval maps. Namely \mbox{$\beta$-transformations} -- transformations of the form $T_{\beta, \alpha} : x \mapsto \beta x + \alpha \; (\bmod \, 1)$ acting on $[0,1]$, where $(\beta, \alpha)$ belongs to the parameter space $\Delta = \{ (b,a) \in \mathbb{R}^{2} : \beta \in (1,2] \; \text{and} \; \alpha \in [0, 2-\beta] \}$. This class of transformations have motivated a wealth of results, providing practical solutions to a variety of problems. They arise as Poincar\'e maps of the geometric model of Lorenz differential equations~\cite{MR681294}, Daubechies \textsl{et al.} \cite{1011470} proposed a new approach to analog-to-digital conversion using \mbox{$\beta$-transformations}, and Jitsumatsu and Matsumura \cite{Jitsumatsu2016AT} developed a random number generator using \mbox{$\beta$-transformations}. Through their study, many new phenomena have appeared, revealing rich combinatorial and topological structures, and unexpected connections to probability theory, ergodic theory and aperiodic order, see for instance \cite{Komornik:2011,ArneThesis,bezuglyi_kolyada_2003}.

This class of interval maps also has an intimate link to metric number theory in that they give rise to \mbox{non-integer} based expansions of real numbers. Given $\beta \in (1, 2)$ and $x \in [0, 1/(\beta-1)]$, an infinite word $\omega = \omega_{1}\omega_{2}\cdots \in \{0,1\}^{\mathbb{N}}$ with letters in the alphabet $\{0, 1\}$ is called a \textsl{$\beta$-expansion} of $x$ if 
    \begin{align*}
        x = \sum_{n \in \mathbb{N}} \omega_{n} \, \beta^{-n}. 
    \end{align*}
Through iterating the map $T_{\beta, \alpha}$ one obtains a subset $\Omega_{\beta, \alpha}$ of $\{ 0, 1\}^{\mathbb{N}}$ known as the \emph{$(\beta,\alpha)$-shift}, where each $\omega \in \Omega_{\beta, \alpha}$ is a \mbox{$\beta$-expansion}, and corresponds to a unique point in $[\alpha/(\beta-1), 1+\alpha/(\beta-1)]$, see for instance \cite{MR3369224} for further details.  In contrast to integer based expansions (such as binary or based $10$ expansions), for a given $x \in [0,1/(\beta-1)]$, if $\beta \leq (-1+\sqrt{5})/2$, then $x$ has uncountably many non-integer based expansions \cite{Erdos1990}, and if $\beta \in ((-1+\sqrt{5})/2, 2)$, then Lebesgue almost all $x \in [0,1/(\beta-1)]$ have uncountably many non-integer based expansions \cite{Sid_2003}.  It is precisely this property which allows for the aforementioned applications in analogue to digital conversion and random number generation.

Since \mbox{$\beta$-transformations} may not be transitive, \Cref{thm:Shadowing_Main} cannot necessarily be applied in this setting. However, we may find first return maps which are transitive, see \cite{G1990,PalmerThesis}. By combining this result with \Cref{thm:Shadowing_Main}, we show the following.

    \begin{mainthm}\label{thm:beta_shadowing}
        \mbox{$\beta$-transformations} do not have finite shadowing, and hence, do not have shadowing.
    \end{mainthm}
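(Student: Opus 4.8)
The plan is to reduce Theorem~\ref{thm:beta_shadowing} to Theorem~\ref{thm:Shadowing_Main} by passing to a transitive first return map. Fix $(\beta,\alpha)\in\Delta$ and write $T=T_{\beta,\alpha}$. First I would observe that $T$ is \emph{itself} a piecewise continuous monotone interval map in the sense defined above: since $\beta\in(1,2]$ and $\alpha\in[0,2-\beta]$, the value $\beta x+\alpha$ crosses $1$ exactly once, at $z_{0}=x_{*}=(1-\alpha)/\beta\in(0,1)$, so $T$ is strictly increasing with slope $\beta>1$ and continuous on each of $I_{0}=(0,x_{*})$ and $I_{1}=(x_{*},1)$, while $\lim_{x\nearrow x_{*}}T(x)=1\neq 0=\lim_{x\searrow x_{*}}T(x)$. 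Thus $T$ always has a genuine interior discontinuity, and the \emph{only} hypothesis of Theorem~\ref{thm:Shadowing_Main} that can fail is transitivity; in particular, whenever $T$ happens to be transitive (for instance $\beta=2$, $\alpha=0$, the doubling map) the result is immediate.

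For the remaining, non-transitive, parameters I would invoke the renormalisation structure of these Lorenz-type maps from \cite{G1990,PalmerThesis}: there is a subinterval $J\subseteq[0,1]$ to which the first return map $R=R_{J}$ is well defined and which, after affinely rescaling $J$ to $[0,1]$, is again a \emph{transitive} piecewise continuous monotone interval map (indeed of the same two-branch type, with finitely many monotone branches and a discontinuity inherited from $x_{*}$). Here I would isolate the relevant core — an interval whose endpoints lie on the forward orbits of the one-sided limits of $T$ at $x_{*}$ — and check that $R$ meets the hypotheses of Theorem~\ref{thm:Shadowing_Main}. That theorem then applies to $R$ and shows that $R$ fails to have finite shadowing: there is a scale $\epsilon_{0}>0$ such that for every $\delta'>0$ some finite $\delta'$-pseudo-orbit of $R$ is not $\epsilon_{0}$-shadowed by any $R$-orbit.

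It remains to transfer this failure from $R$ back to $T$. Given $\delta>0$, I would take such a bad $R$-pseudo-orbit $(y_{i})_{i=0}^{k}$ at scale $\delta'=\delta$ and expand it into a $T$-pseudo-orbit: since $R(y_{i})=T^{n_{i}}(y_{i})$ for the first return time $n_{i}$ of $y_{i}$, concatenating the genuine $T$-orbit segments $y_{i},T(y_{i}),\dots,T^{n_{i}-1}(y_{i})$ and jumping by less than $\delta$ only at each return yields a finite $\delta$-pseudo-orbit of $T$. One then wants to argue that no $T$-orbit $\epsilon_{1}$-shadows this concatenation (for a fixed $\epsilon_{1}$ depending on $\epsilon_{0}$ and the geometry of $J$), since a shadowing $T$-orbit, sampled at the return indices $m_{i}=n_{0}+\cdots+n_{i-1}$, would descend to an $R$-orbit $\epsilon_{0}$-shadowing $(y_{i})$, contradicting the choice of $(y_{i})$.

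I expect this descent to be the main obstacle. The difficulty is that the first return time function $\tau$ is only \emph{locally} constant, so a priori the return combinatorics of the shadowing point $z$ need not match the $n_{i}$; moreover, because $T$ is uniformly expanding, a displacement of $z$ of size $\epsilon_{1}$ can alter its return itinerary drastically, so one cannot simply declare $(T^{m_{i}}(z))_{i}$ to be an $R$-orbit. To overcome this I would exploit that the bad configuration produced by Theorem~\ref{thm:Shadowing_Main} is witnessed by the \emph{robust} jump of $R$ across its inherited discontinuity, of a definite size $\gamma>0$: setting $\epsilon_{1}$ comparable to $\gamma$ (and smaller than the distance separating $J$ from the intermediate orbit points, so that the samples $T^{m_{i}}(z)$ are forced back into $J$ on the correct branch, where $\tau\equiv n_{i}$) makes the transported pseudo-orbit robustly non-shadowable while insulating the argument from small return-time wobble. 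Carrying this out carefully — in particular verifying that the core interval $J$ and the witness can be chosen so that the relevant returns are clean — is the crux, after which the contradiction with the failure of finite shadowing for $R$ completes the proof, and "hence no shadowing" follows since shadowing implies finite shadowing.
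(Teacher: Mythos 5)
Your overall strategy is the same as the paper's: handle transitive $T_{\beta,\alpha}$ directly with \Cref{thm:Shadowing_Main}, and otherwise pass to the first return map $R$ on an interval $J$, which by \cite{G1990,PalmerThesis} (\Cref{thm:beta_transitive} in the paper) is affinely conjugate to a transitive $\beta^{n}$-transformation, so that \Cref{thm:Shadowing_Main} applies to it. One of your worries evaporates immediately: by \Cref{thm:beta_transitive} the first return time is \emph{constant}, equal to $n$, on all of $J$ (indeed $T^{i}_{\beta,\alpha}(x)\notin J$ for all $x\in J$ and $i\in\{1,\dots,n-1\}$, and $T^{n}_{\beta,\alpha}(J)=J$), so there is no ``return-time wobble'' and expanding an $R$-pseudo-orbit into a $T_{\beta,\alpha}$-pseudo-orbit is clean, exactly as you describe.

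The genuine gap is in the descent step, which you correctly identify as the crux but do not resolve. Your plan is to choose $\epsilon_{1}$ small compared with a jump size $\gamma$ and with the distance separating $J$ from the intermediate orbit points, so that the samples $T^{m_{i}}_{\beta,\alpha}(z)$ of a shadowing orbit are ``forced back into $J$''. No choice of $\epsilon_{1}$ can do this: proximity never implies membership. The bad pseudo-orbit $(y_{i})$ supplied by \Cref{thm:Shadowing_Main} consists of genuine orbit segments of points near the discontinuity of $R$, and these may come arbitrarily close to the endpoints of $J$; a shadowing point $z$ may therefore lie just outside $J$, in which case $(T^{n i}_{\beta,\alpha}(z))_{i}$ is a $T^{n}_{\beta,\alpha}$-orbit but \emph{not} an orbit of $R=T^{n}_{\beta,\alpha}|_{J}$, so no contradiction with the choice of $(y_{i})$ arises. (Likewise the images $T^{l}_{\beta,\alpha}(J)$, $1\le l\le n-1$, are disjoint from $J$ but need not be at positive distance from it, so the quantity you want $\epsilon_{1}$ to undercut may be zero.) The paper closes this gap with an idea your proposal is missing: using transitivity of the return map, it splices into the pseudo-orbit a genuine connecting orbit passing through a point $a^{*}$ chosen so that $(a^{*}-\epsilon,a^{*}+\epsilon)\subset J$, which forces any $\epsilon$-shadowing orbit \emph{inside} $J$ at that index; since $T^{n}_{\beta,\alpha}(J)=J$, the shadowing orbit then lies in $J$ at every subsequent multiple of $n$; and, crucially, the bad block $(w_{i})$ is concatenated a \emph{second} time after this forced entry, so that the tail of the shadowing orbit is a genuine $R$-orbit that $\epsilon$-shadows $(w_{i})$ --- the desired contradiction. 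Without this ``force entry into $J$, then repeat the bad block'' mechanism (or an equivalent substitute), your transfer of the failure of shadowing from $R$ to $T_{\beta,\alpha}$ does not go through.
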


In \cite{MR3937681}, it was shown that the set of \mbox{$\beta$-transformations} in $\Delta$ which are subshifts of finite type is a countable dense set. This in tandem with the \Cref{thm:beta_shadowing}, creates a beautiful contrast to the setting of continuous interval maps \cite{bowen-markov-partitions,Coven,GM2020}.

\subsection*{Outline} \Cref{sec:Proof_Thm_A,sec:Proof_Thm_B} are dedicated to the proofs of \Cref{thm:Shadowing_Main} and \Cref{thm:beta_shadowing} respectively, and in \Cref{sec:conclusion} we present some concluding remarks and open questions.

    \section{Transitive piecewise continuous monotone interval maps do not have shadowing -- A proof of \texorpdfstring{\Cref{thm:Shadowing_Main}}{Theorem A}}\label{sec:Proof_Thm_A}

    \begin{proof}[Proof of \Cref{thm:Shadowing_Main}]
    Let $f : [0,1] \to [0,1]$ denote a transitive piecewise continuous monotone interval map, with partition $I_0 = (0,z_{0}), I_{1}=(z_{0},z_{1}), \dots, I_{n-1}=(z_{n-2},z_{n-1}), I_n = (z_{n-1},1)$, for some $n \in \mathbb{N}$. Denote by $\lvert I_{j} \rvert$ the length of $I_{j}$ and set
        \begin{align*}
            f_{+}(z_{j}) = \lim_{x \searrow z_{j}} f(x)
            \quad \text{and} \quad
            f_{+}(z_{j}) = \lim_{x \nearrow z_{j}} f(x).
        \end{align*}
We assume without loss of generality that there exists $k \in \{ 0, \dots, n-1 \}$ with $f_{+}(z_{k}) = f(z_{k})$, as the situation when $f_{-}(z_{j}) = f(z_{j})$ for all \mbox{$j \in \{ 0, \dots, n-1 \}$} follows with analogous arguments. With this in mind, we divide the proof into two cases, when $f(z_{k}) = f_{+}(z_{k}) \neq 0$ and when $f(z_{k}) = f_{+}(z_{k}) = 0$. Before proving either of these cases, we introduce some notation. For $j \in \mathbb{N}$ and $z \in [0,1]$, let $j_{z} = 0$ if one of the following scenarios hold, and $1$ otherwise.
        \begin{itemize}
            \item If $f^{j}(z) \in \operatorname{int}(I_{i})$ for some $i \in \{ 0, \dots, n \}$ and $f^{j}$ restricted to $I_{i}$ is monotonically increasing.
            \item If $f^{j}(y) = z_{i}$ for some $i \in \{ 0, \dots, n-1 \}$, $f(z_{i}) = f_{+}(z_{i})$ and $f^{j}$ restricted to $I_{i+1}$ is monotonically increasing.
            \item If $f^{j}(y) = z_{i}$ for some $i \in \{ 0, \dots, n-1 \}$, $f(z_{i}) = f_{-}(z_{i})$ and $f^{j}$ restricted to $I_{i}$ is monotonically increasing.
            \item If $f^{j}(y) = 0$ or $f^{j}(y) = 1$ and $f^{j}$ restricted to $I_{0}$ or $I_{n}$, respectively, is monotonically increasing.
        \end{itemize}       
    Namely, $j_{z}$ records if the branch of $f^{j}$ to which $f^{j}(z)$ belongs, is monotonically increasing or decreasing.

        \newpage

    {\bfseries Case\,1.} ($f(z_{k}) = f_{+}(z_{k}) \neq 0$): Let $\eta, \epsilon \in \mathbb{R}_{>0}$ be such that $\eta < \min\{f_{-}(z_{k}), \epsilon \}$, $\epsilon + \eta < \min \{ \lvert I_{j} \rvert : j \in \{ 0, \dots, n \} \}$, $\lvert f(x) - f_{+}(z_{j}) \rvert > \epsilon$ for all $x \in (z_{j}, z_{j}+\epsilon)$, and $\lvert f(z_{j}) - f_{-}(z_{j}) \rvert > \epsilon$ for all $x \in (z_{j}-\epsilon, z_{j})$. By way of contradiction, suppose there exists a $\delta \in (0, \eta)$ such that any finite $\delta$-pseudo orbit can be \mbox{$\epsilon$-shadowed} by a true (finite) orbit.  Since $f$ is transitive, there exist $y \in (f(z_{k})-\delta,f(z_{k}))$ and $m \in \mathbb{N}$ with $f^{m}(y) = z_{k}$.  Consider the finite $\delta$-pseudo orbit $(x_{i})_{i\in \{0, \dots, m+2\}}$, where we set
        \begin{align*}
            x_{0}=z_{k}, \;\,
            x_{1}=y, \;\,
            x_{2}=f(y), \; \dots \;
            x_{m}=f^{m-1}(y), \;\,
            x_{m+1}=f^{m}(y)-(-1)^{m_{y}}\delta=z_{k}-(-1)^{m_{y}}\delta, \;\,
            x_{m+2}=f(z_{k}-(-1)^{m_{y}}\delta).
        \end{align*}
    By our hypothesis, there exists $x^{*} \in [0,1]$ which $\epsilon$ shadows $(x_{i})_{i \in \{0, \dots, m+2\}}$, namely with $\lvert f^{i}(x^{*}) - x_{i} \rvert < \epsilon$ for all $i \in \{0, \dots, m+2\}$.  By construction, $x^{*} \geq x_{0} = z_{k}$, and for any $j \in \{ 0, \dots, m\}$, one of the following scenarios hold:
        \begin{itemize}
            \item $x_{j}$ and $f^{j}(x^{*})$ both belong to the interior of $I_{l}$, for some $l \in \{0,\dots,n-1\}$;
            \item $x_{j} = z_{l}$ and $f(z_{l}) = f_{+}(z_{l})$, for some $l \in \{0,\dots,n-1\}$, in which case $f^{j}(x^{*}) \in \operatorname{int}(I_{l+1})$;
            \item $x_{j} = z_{l}$ and $f(z_{l}) = f_{-}(z_{l})$, for some $l \in \{0,\dots,n-1\}$, in which case $f^{j}(x^{*}) \in \operatorname{int}(I_{l})$;
            \item $f^{j}(x^{*}) = z_{l}$ and $f(z_{l}) = f_{+}(z_{l})$, for some $l \in \{0,\dots,n-1\}$, in which case $x_{j} \in \operatorname{int}(I_{l+1})$;
            \item $f^{j}(x^{*}) = z_{l}$ and $f(z_{l}) = f_{-}(z_{l})$, for some $l \in \{0,\dots,n-1\}$, in which case $x_{j} \in \operatorname{int}(I_{l})$.
        \end{itemize}
    Thus, we have $(-1)^{j_{y}}f^{j}(x^{*}) < (-1)^{j_{y}} x_{j}$ for all $j \in \{1, \dots, m+1 \}$, and hence 
        \begin{align*}
        (-1)^{m_{y}}x_{m+1} < (-1)^{m_{y}}z_{k} \leq (-1)^{m_{y}}f^{m+1}(x^{*}) <  (-1)^{m_{y}}(z_{k} + (-1)^{m_{y}}\epsilon).
        \end{align*}
    However, this implies $\lvert f^{m+2}(x^{*}) - x_{m+2} \rvert > \epsilon$, yielding a contradiction and thus proving the result when $f(z_{k}) \neq 0$.

    {\bfseries Case\,2.} ($f(z_{k}) = f_{+}(z_{k}) = 0$): Let $\eta, \epsilon \in \mathbb{R}_{>0}$ be as in the case when $f(z_{k}) \neq 0$, replacing the condition that $\eta < \min\{f_{-}(z_{k}), \epsilon \}$ by the condition $\eta < \epsilon$. By way of contradiction, suppose there exists a $\delta \in (0, \eta)$ such that any finite $\delta$-pseudo orbit can be \mbox{$\epsilon$-shadowed} by a true (finite) orbit.  By transitivity, there exist $y \in (0, \delta)$ and $m \in \mathbb{N}$ with $f^{m}(y) = z_{k}$.  We may also find $w_{0} \in (0,y)$ and $l_{0} \in \mathbb{N}$ with $f^{l_{0}}(w_{0}) = z_{k}$. Similarly, we may find $w_{1} \in (y,\delta)$ and $l_{1} \in \mathbb{N}$ with $f^{l_{1}}(w_{1}) = z_{k}$. Setting $b = m_{y} \; (\bmod \, 2)$, we let $w=w_{b}$ and $l=l_{b}$ and consider the $\delta$-pseudo orbit $(x_{i})_{i \in \{0, \dots,m+l_{b}+3\}}$, where we set
        \begin{alignat*}{11}
            \;\; x_{0} &= z_{k}, \;\;
            \;\; &x_{1} &= y, \;\;
            \;\; &x_{2} &= f(y),
            \;\; &\dots\dots&
            \dots &x_{m} &= f^{m-1}(y), \;\;
            \;\; &x_{m+1} &= z_{k}, \\\
            \;\; & 
            \;\; &x_{m+2} &= w, \;\;
            \;\; &x_{m+3} &= f(w), \;
            \;\; &\dots\dots&
            &x_{m+l+1} &= f^{l-1}(w), \;\;
            \;\; &x_{m+l+1} &= z_{k} - (-1)^{m_{y}+l_{y}}\delta.
        \end{alignat*}
    Note, $l_{y}$ is recording if the branch of $f^{l}$ to which $f^{l}(w)$ belongs, is monotonically increasing, or monotonically decreasing. Following analogous arguments to the case $f(z_{k}) \neq 0$, one obtains a contradiction to our hypothesis that $f$ has finite shadowing.
    \end{proof}

    \section{Applications to \texorpdfstring{$\beta$}{beta}-transformations -- A proof of \texorpdfstring{\Cref{thm:beta_shadowing}}{Theorem B}}\label{sec:Proof_Thm_B}

Here, we present the proof of \Cref{thm:beta_shadowing}.  For this we require the following. Let $(X, \mathrm{d}_{1})$ and $(Y, \mathrm{d}_{2})$ denote two metric spaces, and let $f: X \to X$ and $g : Y \to Y$.  We say that the dynamical systems $(X, f)$ and $(Y, g)$ are \textsl{conjugate} if there exists a homeomorphism $h : X \to Y$ such that $h \circ f = g \circ h$.

    \begin{theorem}[\cite{G1990,PalmerThesis}]\label{thm:beta_transitive}
        Given $(\beta, \alpha)\in \Delta$ with $T_{\beta,\alpha}$ non-transitive,
        there exists an interval $J \subset [0,1]$, an integer $n>2$ and an $\widehat{\alpha} \in [0, 2-\beta^n]$, such that $T_{\widehat{\alpha},\beta^n}$ is transitive, $T_{\beta,\alpha}^{n}(J)=J$, and $T_{\alpha,\beta}^{n}$ restricted to $J$ is topologically conjugate to $T_{\widehat{\alpha},\beta^n}$ where the conjugation map $h$ is an affine transformation.  Moreover, $T_{\beta,\alpha}^{i}(x) \not\in J$, for all $x \in J$ and all $i \in \{ 1, \dots, n-1 \}$, namely $T_{\alpha,\beta}^{n}$ restricted to $J$ is the first return map of $T_{\alpha,\beta}$ on $J$.
    \end{theorem}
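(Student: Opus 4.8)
The plan is to exhibit a renormalization of $T := T_{\beta,\alpha}$. Recall that $T$ is an expanding, piecewise-affine Lorenz-type map: it has a single discontinuity at $c = (1-\alpha)/\beta$, two strictly increasing affine branches, each of slope $\beta$, and one-sided limits $\lim_{x \nearrow c} T(x) = 1$ and $\lim_{x \searrow c} T(x) = 0$. Since $T^n$ is, on each of its branches, a composition of $n$ affine maps of slope $\beta$, every branch of every iterate has slope $\beta^n$; this uniformity is what will force the return map to be another $\beta$-transformation rather than a general Lorenz map. I would first isolate a single renormalization step and then iterate it.

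\emph{Step 1 (the renormalization step).} The crux is the structural claim that if $T$ is not transitive, then it is renormalizable: there is a closed interval $J \ni c$ and a least integer $n \ge 2$ such that $T^i(x) \notin J$ for all $x \in J$ and $1 \le i \le n-1$, while $T^n(J) = J$. The natural candidate is $J = [\,T^{n-1}(0),\, T^{n-1}(1)\,]$, with $n$ the least time for which this interval straddles $c$ and is not entered earlier; here $0 = \lim_{x\searrow c}T(x)$ and $1 = \lim_{x\nearrow c}T(x)$ are the two critical values, so the endpoints of $J$ are carried along the two kneading orbits. Non-transitivity is precisely the statement that the forward orbits of $0$ and $1$ fail to fill $(0,1)$, leaving a nondegenerate neighbourhood of $c$ with a well-defined, finite first-return time. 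Establishing the \emph{finiteness} of $n$ and the exact first-return picture is the heart of the matter, and is where the kneading theory of Lorenz maps from \cite{G1990,PalmerThesis} is invoked (the expanding hypothesis rules out wandering intervals, so the relevant combinatorics is finite). I expect this to be the main obstacle.

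\emph{Step 2 (the return map is a $\beta$-transformation).} By minimality of $n$, the only discontinuity of $T^n$ inside $J$ is at $c$ itself, so $R := T^n|_J$ has exactly two branches, both increasing and of slope $\beta^n$, with $\lim_{x\nearrow c} R(x) = T^{n-1}(1) = \max J$ and $\lim_{x\searrow c} R(x) = T^{n-1}(0) = \min J$. Thus $R$ is a full expanding Lorenz map of $J$ onto $J$. Conjugating by the affine homeomorphism $h(x) = (x - \min J)/\lvert J\rvert$ sends $R$ to a self-map $\widehat T = h\circ R\circ h^{-1}$ of $[0,1]$ with two increasing affine branches of slope $\beta^n$; the left branch reaches $1$ and the right branch leaves from $0$ at the common breakpoint $h(c)$, which forces the right branch to be the left one translated down by exactly $1$. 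Hence $\widehat T(x) = \beta^n x + \widehat\alpha \pmod 1$ with $\widehat\alpha = \widehat T(0)$, and since $R(J) \subseteq J$ we get $\widehat T([0,1]) \subseteq [0,1]$, which is exactly $\widehat\alpha \in [0, 2-\beta^n]$ (in particular $\beta^n \le 2$). This identifies $T^n|_J$, up to the affine conjugacy $h$, with $T_{\beta^n,\widehat\alpha}$.

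\emph{Step 3 (transitivity and termination).} It remains to guarantee that $\widehat T = T_{\beta^n,\widehat\alpha}$ is transitive. If it is, we are done. If not, apply Steps 1--2 to $\widehat T$, producing a further renormalization of some period $n' \ge 2$ and base $(\beta^n)^{n'} = \beta^{nn'} \le 2$. Pulling the renormalization interval back through $h$ (a composition of affine maps is affine) and using that first-return times multiply, we obtain a renormalization of $T$ of period $nn'$. Crucially, every base that arises is $\le 2$, so the total period $N$ satisfies $\beta^N \le 2$, i.e.\ $N \le \log 2/\log\beta$; as each step multiplies the period by at least $2$, the process terminates after finitely many steps at a transitive $\beta$-transformation $T_{\beta^N,\widehat\alpha}$. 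Taking $n = N$ and $J$ the nested final interval gives all the asserted properties, the first-return description included. Finally, $n \ge 2$: an interval with $T(J) = J$ must contain $c$ (otherwise $T|_J$ is a monotone, hence non-transitive, homeomorphism), and the only $T$-invariant interval containing $c$ is $[0,1]$ (whose return map is $T$ itself, non-transitive by hypothesis), so the period cannot be $1$; pinning down the precise value of $n$ is part of the combinatorial bookkeeping of Step 1.
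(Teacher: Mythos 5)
You should first note a structural point: the paper never proves \Cref{thm:beta_transitive} at all --- it is imported as a black box from Glendinning \cite{G1990} and Palmer \cite{PalmerThesis}, so there is no in-paper argument to compare yours against. Measured against the actual proofs in those references, your outline does reconstruct the right strategy (renormalization, affine normalization of the return map, termination of the renormalization tower), and Steps 2 and 3 are essentially correct: given that the first return time to $J$ is finite and \emph{constant} on $J$, your computation that the return map is affinely conjugate to $T_{\beta^n,\widehat{\alpha}}$ with $\widehat{\alpha} \in [0,2-\beta^n]$ is sound, and your termination argument --- every renormalized base stays $\leq 2$, so $\beta^N \leq 2$ bounds the total period and the tower is finite --- is clean and correct.

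The genuine gap is that Step 1, which you yourself flag as ``the heart of the matter,'' is exactly the content of the theorem being proved, and you do not prove it: you name a candidate $J$ and then invoke ``the kneading theory of Lorenz maps from \cite{G1990,PalmerThesis}'' --- which is circular for a blind proof, since the claim that non-transitivity forces a renormalization of this specific type \emph{is} \Cref{thm:beta_transitive}. Concretely, three sub-claims are missing. First, for a general expanding Lorenz map a renormalization has type $(n_L,n_R)$ with possibly different return times on the two sides of the discontinuity $c$, producing a return map whose two branches have different slopes $\beta^{n_L} \neq \beta^{n_R}$; the assertion that for maps in $\Delta$ non-transitivity forces $n_L = n_R$ --- equivalently the constant return time demanded by ``$T_{\beta,\alpha}^{i}(x) \notin J$ for all $x \in J$'' --- is a genuinely special feature of $\beta$-transformations and requires proof, not just the observation that all branch slopes of $T^n$ equal $\beta^n$. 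Second, your inference that ``by minimality of $n$, the only discontinuity of $T^n$ inside $J$ is at $c$'' does not follow: minimality prevents the intermediate images $T^i(J)$ from re-entering $J$, but not from meeting $c$, and ruling that out belongs to the same unproved combinatorial analysis. Third, you never verify for your candidate $J = [T^{n-1}(0), T^{n-1}(1)]$ that $T^n(J) = J$ or that the return time is defined, let alone finite, on all of $J$. A smaller mismatch: your argument only excludes $n = 1$, i.e.\ gives $n \geq 2$, whereas the statement asserts $n > 2$; period-$2$ first-return structures can in fact be constructed by solving the obvious inequalities when $\beta < \sqrt{2}$, so reconciling your bound with the stated one is a further piece of bookkeeping your sketch does not deliver.
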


    \begin{proof}[Proof of \Cref{thm:beta_shadowing}]
        If $T_{\beta,\alpha}$ is transitive, then \Cref{thm:Shadowing_Main} gives us our desired result. Therefore, assume that $T_{\beta,\alpha}$ is not transitive. Let $J$, $n$ and $\widehat{\alpha}$ be as in \Cref{thm:beta_transitive}, set $T : J \to J$ to be the restriction of $T^{n}_{\beta,\alpha}$ to $J$, and let $\epsilon \in (0, \lvert J \rvert/2)$.  By way of contradiction suppose that there exists $\delta > 0$, such that any finite $\delta$-pseudo orbit with respect to $T_{\beta,\alpha}$ can be $\epsilon$-shadowed by a true (finite) orbit of $T_{\beta,\alpha}$. 

        Since $T_{\widehat{\alpha},\beta^n}$ is piecewise continuous monotone interval map, \Cref{thm:beta_shadowing} in tandem with the fact that $T$ is conjugate to $T_{\widehat{\alpha},\beta^n}$ by an affine transformation, yields the existence of a finite $\delta$-pseudo-orbit $( w_i )_{i \in \{0, \dots, m\}}$ under $T$, which cannot be $\epsilon$-shadowed by a true (finite) orbit of $T$. 

        Choose $a^* \in J$ with $(a^*-\epsilon, a^*+\epsilon) \subset J$. By transitivity, there exists $y^* \in (T(w_m)-\delta,T(w_m)+\delta)\cap J$ and $k \in \mathbb{N}$ with $T^{k}(y^*)=a^*$ .  We may also find $z^* \in (T(a^*)-\delta, T(a^*)+\delta)\cap J$ and $t \in \mathbb{N}$ with $T^{t}(z^*)=w_0$. 

            \newpage

        With this at hand, consider the finite $\delta$-pseudo-orbit $(x_{i})_{i \in \{ 0, \dots, 2m+k+t+2\}}$ of $T$ given by 
            \begin{alignat*}{15}
                x_{0}&=w_0, \;\;
                &\dots\dots& \;\;
                &x_{m}&=w_m, \;\;\;\;
                &x_{m+1}&=y^*, \;\;\;\;
                &x_{m+2}&=T(y^*), \;\;\;
                &\dots\dots&
                \dots &\!\!x_{m+k}&=T^{k-1}(y^*), \;\;\;\;
                x_{m+k+1}=a^*,\\
                &&&&&
                &x_{m+k+2}&=z^*, \;\;\;\;
                &x_{m+k+3}&=T(z^*), \;\;\;
                &\dots\dots&
                &\!\!x_{m+k+t+1}&=T^{t-1}(z^*),\\
                &&&&&
                &x_{m+k+t+2}&=w_0, \;\;\;\;
                &x_{m+k+t+3}&=w_1, \;\;
                \dots&\dots\dots&
                &\!\!x_{2m+k+t+2}&=w_m.
            \end{alignat*}
        By our hypothesis, there exists $c^* \in [0,1]$ such that, under $T_{\beta,\alpha}$, the finite $\delta$-pseudo-orbit $(v_{i})_{i \in \{0,\dots n(2m+k+t+2)\}}$, where
            \begin{align*}
                v_{i} =
                    \begin{cases}
                        x_{j} & \text{if} \; n = j n \; \text{for some} \; j \in \{0, \dots, 2m+k+t+2 \}\\
                        T^l(x_{(j}) \quad & \text{if} \; i = j n + l \; \text{for some} \; j \in \{0, \dots, 2m+k+t+2 \} \; \text{and} \; l \in \{1, \dots, n-1\},
                    \end{cases}
            \end{align*}
        is $\epsilon$-shadowed by $(T_{\beta, \alpha}^{i}(c^{*}))_{i \in \{0,\dots n(2m+k+t+2)\}}$. Hence, under $T$, the $\delta$-pseudo-orbit $(x_{i})_{i \in \{ 0, \dots, 2m+k+t+2\}}$ is \mbox{$\epsilon$-shadowed} by $(T^{i}(c^{*}))_{i \in \{0,\dots 2m+k+t+2 \}}$.  By assumption $( w_i )_{i \in \{0, \dots, m\}}$ cannot be $\epsilon$-shadowed by a true orbit of $T$, and so $c^* \in [0,1]\setminus J$. By \Cref{thm:beta_transitive}, and since $\lvert T^{m+k+1}_{\beta,\alpha}(c^*) - a^*\rvert < \epsilon$, we have that $T_{\beta,\alpha}^{n(m+k+1+j)}(c^*)$, and hence $T^{m+k+1+j}(c^{*})$, belongs to the interval $J$, for all $j \in \mathbb{N} \cup \{0\}$. However, this would imply that $(x_{i})_{i\in \{m+k+t+2, \dots, m+k+t+2 \}}$, and hence $(w_{i})_{i \in \{0, \dots, m\}}$ can be $\epsilon$-shadowed by the orbit of $T^{m+k+t+2}_{\beta,\alpha}(c^*) \in J$ under $T$, yielding a contradiction.
    \end{proof}

\section{Concluding remarks and open questions}\label{sec:conclusion}

Anther famous class of interval map which fall within our setting are transitive negative \mbox{$\beta$-transformation}, namely maps of the form $x \mapsto -\beta x + \alpha$.  These maps and their dynamical properties have been well studied, see for instance \cite{neg_2,neg_3,neg_1,neg_4} and references therein.  However, to the authors knowledge, a classification of the set of $(\beta,\alpha)$ for which the negative \mbox{$\beta$-transformation} $x \mapsto -\beta x + \alpha$ is transitive still remain open, namely if an analogue of the results of \cite{G1990,PalmerThesis} hold for negative \mbox{$\beta$-transformations}.  In fact, by appropriately modifying our proof, \Cref{thm:beta_shadowing} can be extended to include any piecewise continuous monotone interval map for which one can find an interval $J$ such that the first return map to $J$ is a transitive piecewise monotone interval map. Thus, if an analogue of the results of \cite{G1990,PalmerThesis} holds for negative \mbox{$\beta$-transformations}, then \Cref{thm:beta_shadowing} could be extended to include negative \mbox{$\beta$-transformations}.

Another open question, would be if our proof of \Cref{thm:Shadowing_Main} could be extended to include transitive maps with at least one discontinuity, but where on the partition elements we remove the condition of monotonicity.  For our class of examples, it would also be interesting to investigate if given a transitive piecewise continuous monotone interval map and $\epsilon, \delta \in \mathbb{R}_{>0}$, one can classify all the $\delta$-pseudo orbits which cannot be $\epsilon$-shadowed by a true orbit.

\bibliographystyle{alpha}
\bibliography{ref}

\end{document}